\newtheorem{thm}{Theorem}[section]
\newtheorem{lem}[thm]{Lemma}
\newtheorem{prop}[thm]{Proposition}
\newtheorem*{thm*}{Theorem}
\theoremstyle{definition}
\newtheorem*{rmks}{Remarks}
\newtheorem{ques}[thm]{Question}
\renewcommand{\rm}[1]{\mathrm{#1}}
\newcommand{\cal}[1]{\mathcal{#1}}
\newcommand{\bbR}{\mathbb{R}}
\newcommand{\bbT}{\mathbb{T}}
\newcommand{\bbZ}{\mathbb{Z}}
\newcommand{\sfE}{E}
\renewcommand{\d}{\mathrm{d}}
\newcommand{\rme}{\mathrm{e}}
\renewcommand{\a}{\alpha}
\newcommand{\eps}{\varepsilon}
\newcommand{\g}{\gamma}
\newcommand{\s}{\sigma}
\newcommand{\ol}[1]{\overline{#1}}
\renewcommand{\to}{\longrightarrow}
\newcommand{\fin}{\nolinebreak\hspace{\stretch{1}}$\lhd$}
\newcommand{\vpartial}{\vec{\partial}}
\begin{document}

\title[Failure of concentration]{On the failure of concentration for the $\ell_\infty$-ball}
\author[Tim Austin]{Tim Austin}
\address{Courant Institute, New York University\\
251 Mercer St, New York, NY 10012\\
U.S.A.}
\thanks{Research supported by a fellowship from the Clay Mathematics Institute}
% put thanks inside author for IJM class
\email{tim@cims.nyu.edu}
\date{}

\begin{abstract}
Let $(X,d)$ be a compact metric space and $\mu$ a Borel probability on $X$.  For each $N\geq 1$ let $d^N_\infty$ be the $\ell_\infty$-product on $X^N$ of copies of $d$, and consider $1$-Lipschitz functions $X^N\to\bbR$ for $d^N_\infty$.

If the support of $\mu$ is connected and locally connected, then all such functions are close in probability to juntas: that is, functions that depend on only a few coordinates of $X^N$.  This describes the failure of measure concentration for these product spaces, and can be seen as a Lipschitz-function counterpart of the celebrated result of Friedgut that Boolean functions with small influences are close to juntas.
\end{abstract}

\maketitle

\thispagestyle{plain}

\pagestyle{plain}

%\parskip 0pt

%\tableofcontents

%\parskip 7pt

%\parindent 0pt

\section{Introduction}

In this paper, a \textbf{metric probability space} will be a triple $(X,d,\mu)$ in which $d$ is a compact metric on $X$ and $\mu$ is a probability measure on the Borel $\s$-algebra of $(X,d)$.

Given a metric space $(X,d)$ and an integer $N \geq 1$, we shall write $d^N_\infty$ for the $\ell_\infty$-product of $N$ copies of $d$: that is,
\[d^N_\infty((x_n)_{n\leq N},(x'_n)_{n\leq N}) := \max_{n\leq N}\,d(x_n,x_n').\]
If $\mu$ is a probability measure on $X$, then $\mu^N$ denotes its $N$-fold product on $X^N$.

Recall that a sequence of metric probability spaces $(X_N,d_N,\mu_N)$ exhibits \textbf{concentration of measure} if
\begin{eqnarray}\label{eq:conc}
\sup_{\scriptsize{\begin{array}{c}f:X_N\to \bbR\\ \rm{Lip}(f) \leq 1\end{array}}} \mu_N\{|f - \hbox{$\int$} f\,\d\mu_N| \geq \delta\} \to 0 \quad\quad \forall \delta > 0
\end{eqnarray}
as $N\to\infty$ (see, for instance,~\cite{Led--book} and the many references there).

Unless $\mu$ is a Dirac mass, the sequence of metric probability spaces $(X^N,d^N_\infty,\mu^N)$ does not exhibit concentration of measure.  Each coordinate projection $\pi_n:X^N\to X$ is $1$-Lipschitz and pushes $\mu^N$ to $\mu$.  Composing one of these with any non-constant $1$-Lipschitz function $f:X\to\bbR$ witnesses that~(\ref{eq:conc}) is violated, because for small enough $\delta > 0$ the supremum on the left is bounded below by the positive constant $\mu\{|f - \int f \,\d\mu| \geq \delta\}$, independently of $N$.

In light of this, one can ask for a rough description of all functions $X^N \to \bbR$ that have a fixed Lipschitz constant but are not highly concentrated for the measure $\mu^N$.  If the support of $\mu$, $\rm{spt}\,\mu$, is disconnected, then there are many such functions, and no special description is conceivable: for instance, if $X = \{0,1\}$, $d(0,1) = 1$ and $\mu = \frac{1}{2}(\delta_0 + \delta_1)$, then \emph{all} functions $X^N\to [0,1]$ are $1$-Lipschitz for the metric $d^N_\infty$.  This observation may easily be extended to other examples with disconnected $\rm{spt}\,\mu$.

However, the situation changes in case $\rm{spt}\,\mu$ has some connectedness.  This will be made precise in Theorem~\ref{thm:bigmain} below, which will involve the following notions.

First, $(X,d,\mu)$ is (\textbf{locally}) \textbf{connected} if $\rm{spt}\,\mu$ is (locally) connected.

Next, when a probability space $(X,\mu)$ is understood and $n \in [N]$, let $\sfE_{[N]\setminus \{n\}}$ denote the conditional expectation operator on $L_1(\mu^N)$ defined by integrating out the $n^{\rm{th}}$ coordinate:
\[\sfE_{[N]\setminus \{n\}}f(x_1,\ldots,x_N) := \int_X f(x_1,\ldots,x_{n-1},y,x_{n+1},\ldots,x_N)\,\mu(\d y).\]
More generally, for $S = [N]\setminus \{n_1,\ldots,n_k\} \subseteq [N]$ let
\[\sfE_S := \sfE_{[N]\setminus \{n_1\}}\circ \cdots \circ \sfE_{[N]\setminus \{n_k\}},\]
where clearly the order of composition is unimportant.  In this notation, $S$ records those coordinates in $[N]$ on which the dependence of $f$ is retained.

Lastly, for any metric space $(X,d)$ and function $f:X\to\bbR$, the \textbf{non-decreasing modulus of continuity} of $f$ is the function $\omega:[0,\infty)\to[0,\infty]$ defined by
\[\omega(r) := \sup\{|f(x) - f(y)|\,|\ d(x,y) \leq r\}.\]
Clearly $\omega(0) = 0$, and uniform continuity of $f$ is equivalent to continuity of $\omega$ at $0$.

\begin{thm}\label{thm:bigmain}
Let $(X,d,\mu)$ be a connected and locally connected metric probability space, and let $\omega:[0,\infty)\to [0,\infty]$ be non-decreasing.  For every $\eps > 0$ there is some integer $p \geq 1$, depending on $X$, $\eps$ and $\omega$, with the following property. For every $N \geq 1$, if $f:X^N\to \bbR$ has modulus of continuity at most $\omega$ for the metric $d^N_\infty$, then there is $S \subseteq [N]$ with $|S|\leq p$ such that
\[\|f - \sfE_Sf\|_{L_1(\mu^N)} < \eps.\]
\end{thm}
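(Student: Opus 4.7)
My approach would be an iterative greedy coordinate-selection scheme, reducing the theorem to a quantitative variance-decrement lemma in which the hypotheses on $\mathrm{spt}\,\mu$ are used.  First I would normalize: compactness of $X$ and the modulus hypothesis force $f$ to range in an interval of length at most $\omega(\mathrm{diam}(X,d))$, so after translation and rescaling I may assume $f:X^N\to[0,1]$.  Define
\[
V(S) \;:=\; \|f - \sfE_S f\|_{L^2(\mu^N)}^2, \qquad S\subseteq [N].
\]
The orthogonality of the Efron--Stein decomposition yields $V(S) = V(T) + \|\sfE_T f - \sfE_S f\|_{L^2}^2$ for $S \subseteq T$, so $V$ is non-increasing in $S$; and Cauchy--Schwarz gives $\|f - \sfE_S f\|_{L^1} \leq \sqrt{V(S)}$.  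It therefore suffices to locate $S$ with $|S|$ bounded in terms of $(X,\mu,\omega,\eps)$ and $V(S) < \eps^2$.

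The heart of the proof should be a \emph{decrement lemma}: there exist $c = c(X,\mu,\omega,\eps) > 0$ and $k = k(X,\mu,\omega,\eps) \in \bbN$ such that whenever $V(S) \geq \eps^2$, one can find $T \subseteq [N]\setminus S$ with $|T| \leq k$ and $V(S \cup T) \leq V(S) - c$.  Iterating at most $1/c$ times then produces $S$ with $|S| \leq k/c =: p$ and $V(S) < \eps^2$.  Allowing a bounded batch $T$, rather than a single coordinate, is important because Efron--Stein weight supported on large subsets can prevent any single new coordinate from carrying definite mass, but $k$ coordinates at a time should absorb enough.  To prove the lemma I would use compactness and local connectedness of $\mathrm{spt}\,\mu$ to fix, for a parameter $\eta > 0$ to be chosen with $\omega(\eta)$ much smaller than $\eps$, a finite cover $U_1,\ldots,U_M$ of $\mathrm{spt}\,\mu$ by connected open pieces of $d$-diameter less than $\eta$, with $M$ depending only on $(X,\mu,\omega,\eps)$.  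Each point of $X^N$ then carries a \emph{type} in $[M]^N$, and the $d^N_\infty$-Lipschitz property means that simultaneously moving every coordinate inside its cover piece changes $f$ by at most $\omega(\eta)$.  Thus $f$ is approximated pointwise by a function on the finite product $[M]^N$, and the modulus hypothesis together with the connectedness of the nerve of the cover (which allows one to walk between adjacent types at controlled cost to $f$) should reduce the decrement lemma to a Friedgut-style junta estimate for bounded functions on $[M]^N$ satisfying this adjacency constraint.

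The main technical obstacle will be the quantitative bookkeeping across the discretization.  One must verify that the finite-alphabet junta theorem can be proved with coordinate bound depending only on $M, \omega, \eps$ and not on $N$; that bounded influence at the discrete level pulls back to a uniform variance contribution of the relevant coordinates on $X^N$; and that all of these estimates can be arranged independently of $N$, with the errors from approximating $f$ by its type-function properly absorbed into $\eps$.  The decisive feature enabling the scheme is one that is special to $\ell_\infty$: all $N$ coordinates may be perturbed by a common $d$-step of size $\eta$ at a cost of at most $\omega(\eta)$ in $f$.  This is not available for $\ell_1$ or $\ell_2$ products and, combined with the finite discretization provided by compactness and local connectedness of $\mathrm{spt}\,\mu$, is what makes the junta size $p$ depend only on $(X,\mu,\omega,\eps)$.
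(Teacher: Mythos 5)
Your proposal differs fundamentally from the paper's, and it rests on two unproved claims whose proofs would be at least as hard as the theorem itself. The decrement lemma --- that $V(S) \geq \eps^2$ always forces a batch $T$ of size $\leq k$ with $V(S\cup T) \leq V(S) - c$, where $k,c$ are independent of $N$ --- is asserted but never argued, and greedy variance decrement is not how any Friedgut-type theorem is actually proved: even the Boolean case goes through hypercontractivity, precisely because Efron--Stein mass sitting on moderately large index sets can evade any fixed-size batch. Worse, the proposed route to the decrement lemma, a ``Friedgut-style junta estimate for bounded functions on $[M]^N$ satisfying this adjacency constraint,'' is not a known theorem and is itself close in difficulty to the original problem. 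The paper explicitly notes that the naive analogue of Friedgut's theorem is \emph{false} for functions on $[0,1]^N$ with small total influences (Hatami); your extra structure --- that simultaneously moving every coordinate one cover-step changes $g$ by only $\omega(\eta)$ --- is indeed the right feature of the $\ell_\infty$ setting, but you neither formulate it as a precise hypothesis on $g:[M]^N\to\bbR$ nor prove the discrete theorem it would require. The discretization therefore trades the original continuous problem for an equally hard, currently unsettled discrete one rather than for something standard, and the pull-back step (``bounded influence at the discrete level pulls back to a uniform variance contribution on $X^N$'') is also left entirely to the reader.

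The paper's proof avoids discretization and iteration altogether. Since $(X,d,\mu)$ is connected and locally connected, Gromov's observation gives a uniformly continuous surjection $F:\bbT\to X$ with $F_*m = \mu$; then $F^{\times N}:\bbT^N\to X^N$ is uniformly continuous for the $\ell_\infty$-metrics with the same modulus $\s$ for every $N$, so $f\circ F^{\times N}$ has modulus $\omega\circ\s$, and a Lipschitz-approximation lemma (Lemma~3.1) replaces $f\circ F^{\times N}$ by a nearby $K_\eps$-Lipschitz function on $\bbT^N$. This is exactly where connectedness and local connectedness enter --- through the existence of the Gromov map, not through a cover of $\mathrm{spt}\,\mu$. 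On $\bbT^N$ the coordinate set is written down in one stroke as $S = \{n : \|\partial_n f\|_1 \geq \eta\}$, with $|S| \leq 1/\eta$ automatic from $\|\vpartial f\|_{L_1\ell_1^N}\leq 1$; showing that $\sfE_S f$ is close to $f$ combines the heat-semigroup bound $\|f - P_t f\|_1 \leq \sqrt{t}\,\|\vpartial f\|_{L_1\ell_1^N}$, a reverse Poincar\'e inequality $\|\vpartial P_t f\|_{L_2\ell_2^N}\leq \|f\|_2/\sqrt{t}$, and Weissler's hypercontractivity on the circle. If you insist on your discretization scheme you would in effect need a hypercontractive estimate for $[M]^N$ under the simultaneous-step adjacency structure, at which point the torus reduction is plainly the shorter route.
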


I suspect that this theorem needs only the assumption of connectedness, not local connectedness.

Approximation in $L_1(\mu^N)$ is convenient for the proof, but it implies an approximation in $L_p(\mu^N)$ for any $p < \infty$: this is because $\rm{diam}(X^N,d^N_\infty) = \rm{diam}(X,d) < \infty$ for all $N$, and so the modulus-of-continuity bound implies also a uniform bound on $|f - \int f|$.  However, Theorem~\ref{thm:bigmain} cannot be tightened to an approximation in $L_\infty(\mu^N)$: for instance, on $[0,1]^N$ with Lebesgue measure, the function
\[f(x_1,\ldots,x_N) := \max_{n\leq N} x_n\]
is $1$-Lipschitz, takes values in $(1-\eps,1]$ with probability $1 - (1-\eps)^N$, but is not uniformly close to any function depending on only $N-1$ coordinates.

Let $\bbT:= \bbR/\bbZ$, endowed with the metric inherited from the usual distance on $\bbR$, which we still denote by $|\cdot|$.  Most of the work in proving Theorem~\ref{thm:bigmain} will go towards the following special case:

\begin{thm}\label{thm:smallmain}
For every $\eps > 0$ there is a $q \geq 1$ such that if $f:\bbT^N\to \bbR$ is $1$-Lipschitz for the metric $|\cdot|^N_\infty$, then there is some $S\subseteq [N]$ with $|S|\leq q$ such that
\[\|f - \sfE_Sf\|_{L_1(\bbT^N)} < \eps.\]
\end{thm}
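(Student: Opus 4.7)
The plan is to prove Theorem~\ref{thm:smallmain} by Fourier analysis on $\bbT^N$, adapting the strategy of Friedgut's junta theorem.

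First I would translate the Lipschitz hypothesis into a Fourier bound. At an a.e.\ differentiability point $x$ (by Rademacher), perturbing each coordinate simultaneously by $t\cdot\mathrm{sign}(\partial_n f(x))$ produces an $\ell_\infty$-step of size $t$ but an $f$-change of $t\sum_n|\partial_n f(x)|+o(t)$; the Lipschitz condition therefore forces $\sum_n|\partial_n f(x)|\leq 1$. Since $\sum_n a_n^2\leq(\sum_n a_n)^2$ for $a_n\geq 0$, this yields $\sum_n\|\partial_n f\|_{L_2}^2\leq 1$, or equivalently by Parseval
\[
\sum_{\xi\in\bbZ^N}\|\xi\|_2^2\,|\hat f(\xi)|^2 \leq (4\pi^2)^{-1}.
\]

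Second, since $\|\xi\|_2^2\geq|\mathrm{supp}(\xi)|$, for suitable $R=R(\eps)$ and $k=k(\eps)$ the trigonometric polynomial
\[
g(x):=\sum_{\substack{\|\xi\|_\infty\leq R\\|\mathrm{supp}(\xi)|\leq k}}\hat f(\xi)\,e^{2\pi i\xi\cdot x}
\]
approximates $f$ in $L_2$ to arbitrary accuracy. Letting $\nu_n:=\|g-\sfE_{[N]\setminus\{n\}}g\|_{L_2}^2$, the same weighted Parseval identity gives $\sum_n\nu_n\leq(4\pi^2)^{-1}$, so the set $S:=\{n:\nu_n>\tau\}$ satisfies $|S|\leq(4\pi^2\tau)^{-1}$---a candidate junta set of size independent of $N$.

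The main obstacle is then the final step: bounding $\|g-\sfE_S g\|_{L_2}$. The naive overcount gives only $\|g-\sfE_S g\|_{L_2}^2\leq\sum_{n\notin S}\nu_n\leq(4\pi^2)^{-1}$, which does not vanish with $\tau$. This is precisely the difficulty Friedgut overcomes on $\{0,1\}^N$ via Bonami--Beckner hypercontractivity, exploiting that the Fourier supports appearing in $g-\sfE_S g$ all have cardinality at most $k$. Here I would invoke the analogous hypercontractive inequality for trigonometric polynomials on $\bbT^N$ whose Fourier support is ``low-dimensional'' in this combinatorial sense---obtainable from Weissler's sharp two-point inequality together with tensorization, or from the heat semigroup on $\bbT^N$---and run Friedgut's argument to conclude that $\|g-\sfE_S g\|_{L_2}$ can be made arbitrarily small by choosing $\tau$ small in terms of $\eps$ and $k$. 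Then combining the $L_2$-bounds on $\|f-g\|_{L_2}$ and $\|g-\sfE_S g\|_{L_2}$ via the triangle inequality (and noting $\sfE_S$ is an $L_1$-contraction and $\|\cdot\|_{L_1}\leq\|\cdot\|_{L_2}$) yields $\|f-\sfE_S f\|_{L_1}<\eps$, with $q=|S|$ depending only on $\eps$. This hypercontractive step is where I expect the bulk of the technical work.
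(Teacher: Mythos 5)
The plan breaks at the second step: converting the pointwise Lipschitz bound $\sum_n|\partial_n f(x)|\leq 1$ into the $L_2$-gradient bound $\sum_n\|\partial_n f\|_2^2\leq 1$ (and the resulting Fourier bound $\sum_\xi\|\xi\|_2^2|\hat f(\xi)|^2\leq(4\pi^2)^{-1}$) throws away the $L_1$-structure on which the theorem actually depends. In Friedgut's Boolean setting this loss is invisible because $D_n f$ is $\{0,\pm1\}$-valued, so $\|D_n f\|_1=\|D_n f\|_2^2$; for real-valued Lipschitz functions these quantities diverge, and it is the $L_1$-influences $\|\partial_n f\|_1$ that carry the information. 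To see that your weakened hypothesis is insufficient, consider $f(x)=\frac{1}{\pi\sqrt{2N}}\sum_{n=1}^N\sin(2\pi x_n)$. Then $\sum_n\|\partial_n f\|_2^2=1$, the Fourier support sits entirely on singleton frequencies so $g=f$ for any $k,R\geq 1$, and all your influences $\nu_n$ equal $\frac{1}{4\pi^2 N}$: hence for any fixed $\tau>0$ and $N$ large your set $S$ is empty, yet $\|f-\sfE_\emptyset f\|_1\gtrsim\|f\|_2=\frac{1}{2\pi}$ (a central-limit computation, since $f$ is a normalized sum of i.i.d.\ centered variables). This $f$ satisfies everything you have derived but is bounded away from every bounded-size junta; it is not a counterexample to the theorem itself, since its $|\cdot|^N_\infty$-Lipschitz constant grows like $\sqrt N$, but it kills the route as written.

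The paper keeps $\|\vpartial f\|_{L_1\ell_1^N}=\sum_n\|\partial_n f\|_1\leq 1$ as the governing hypothesis, defines $S:=\{n:\|\partial_n f\|_1\geq\eta\}$ (so $|S|\leq 1/\eta$ automatically), and smooths by the heat semigroup $P_t$ instead of a sharp Fourier cutoff. That replacement is essential: $P_t$ is an $L_p$-contraction for every $p$, so $\|\partial_n P_t f\|_1\leq\|\partial_n f\|_1<\eta$ is preserved for $n\notin S$, whereas a sharp Fourier truncation inflates $L_1$ norms and destroys the one piece of information you need. Weissler's hypercontractivity on $\bbT$, applied to the partial derivatives $\partial_n P_t f$ and interpolated against a reverse Poincar\'e estimate $\|\vpartial P_t f\|_{L_2\ell_2^N}\leq\|f\|_2/\sqrt t$, then converts smallness of $\|\partial_n P_t f\|_1$ into smallness of $\|\partial_n P_{2t}f\|_2$ with the extra power-of-$\eta$ gain needed to sum over $n\notin S$. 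You correctly flagged hypercontractivity as the engine, but it must be fed the $L_1$-influences you discarded at step two; with $S$ built only from the $L_2$-quantities $\nu_n$, the final step has no hope of closing.
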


An easy exercise shows that with the above metrics on product spaces, any conditional expectation of the form $\sfE_Sf$ has modulus of continuity not greater than that of $f$ itself.  Therefore both of these theorems give approximations in measure by juntas that are `as continuous' as the original function $f$.

In fact, the method below will give a stronger variant of Theorem~\ref{thm:smallmain}, formulated in terms of a kind of Sobolev norm for $f$: see Theorem~\ref{thm:medmain}.  This stronger version also has a consequence for maps between Hamming metrics, rather than $\ell_\infty$-metrics.  Recall that given $(X,d)$, the associated \textbf{Hamming metric} $d^N_1$ on $X^N$ is defined by
\[d^N_1((x_n)_{n\leq N},(x'_n)_{n\leq N}) := \frac{1}{N}\sum_{n=1}^Nd(x_n,x'_n).\]
Clearly $d^N_1 \leq d^N_\infty$.  The additional consequence of Theorem~\ref{thm:medmain} is as follows.

\begin{thm}\label{thm:biLiprig}
Let $N,M\geq 1$ with $M/N =: \a$.  Then for every $\eps,L > 0$ there is a $q\geq 1$, depending only on $\eps$, $\a$ and $L$, with the following property.  If $F:[0,1]^N\to [0,1]^M$ is $L$-Lipschitz for the Hamming metrics on domain and target, then  there is another $L$-Lipschitz function $G = (G_1,\ldots,G_M):[0,1]^N\to [0,1]^M$ such that
\[\int_{[0,1]^N}|F(x) - G(x)|^M_1\,\d x < \eps,\]
and such that each $G_m$ depends on only $q$ coordinates of $[0,1]^N$.
\end{thm}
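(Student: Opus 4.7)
The strategy is to derive this result from the scalar Sobolev-type junta theorem (Theorem~\ref{thm:medmain}), applied to each coordinate function $F_m$ of $F$.

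The first step is to unpack the Hamming $L$-Lipschitz hypothesis on $F$ into the equivalent per-coordinate bound
\[\sum_{m=1}^M|F_m(x)-F_m(x')|\le L\alpha\,|x_n-x'_n|\]
whenever $x,x'\in[0,1]^N$ differ only in coordinate $n$. This says that each $F_m$ has all of its partial sensitivities bounded uniformly by $L\alpha$, a quantity independent of $M$ and $N$, so each $F_m$ carries a Sobolev-type norm bounded uniformly in $m$ in terms of $L$ and $\alpha$ alone. After transferring Theorem~\ref{thm:medmain} from $\bbT^N$ to $[0,1]^N$ by standard means (along the lines of how Theorem~\ref{thm:bigmain} is deduced from Theorem~\ref{thm:smallmain}), each $F_m$ falls in the regime where Theorem~\ref{thm:medmain} yields a junta approximation of size $q_0=q_0(\eps,\alpha,L)$, uniformly in $m$.

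The next step is to extract from this a single \emph{common} set $S\subseteq[N]$ with $|S|\le q=q(\eps,\alpha,L)$ such that
\[\frac1M\sum_{m=1}^M\|F_m-\sfE_S F_m\|_{L^1}<\eps,\]
and to set $G_m:=\sfE_S F_m$. Each $G_m$ then depends only on coordinates in $S$, and the $L^1$-approximation bound $\int|F-G|^M_1\,dx<\eps$ is immediate. The Hamming $L$-Lipschitz property of $G$ is preserved by the commonness of $S$: for $x,x'$ differing only in a coordinate $n\in S$,
\[\sum_m|G_m(x)-G_m(x')|\le\int_{[0,1]^{[N]\setminus S}}\sum_m|F_m(x|_S,y)-F_m(x'|_S,y)|\,dy\le L\alpha|x_n-x'_n|,\]
since the two points $(x|_S,y),(x'|_S,y)$ inside the integral differ only in coordinate $n$ and the per-coordinate bound of Step~1 applies to them pointwise; for $n\notin S$ the sum vanishes. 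Commonness of $S$ is essential: with different sets $S_m$ for different $m$ the coupling argument breaks, and one can build simple $N=M=2$ examples in which $(\sfE_{S_m}F_m)_m$ has Hamming Lipschitz constant strictly exceeding that of $F$.

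The main obstacle is therefore the production of such a common $S$ of size depending only on $\eps,\alpha,L$. Running Theorem~\ref{thm:medmain} separately on each $F_m$ yields only individual juntas, whose union is of size up to $Mq_0$, which is not permitted. What is needed is a collective version of the argument: the per-coordinate sum bound of Step~1 controls the total collective coordinate-influence $\sum_n\frac1M\sum_m\|F_m-\sfE_{[N]\setminus\{n\}}F_m\|_{L^1}$ by a constant depending only on $L$, so that a Friedgut-style greedy or iterative selection of coordinates of largest collective influence should produce a common $S$ of the required size; alternatively, one may hope to apply Theorem~\ref{thm:medmain} directly to a scalar aggregation of $F$ that retains all the necessary information. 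Carrying out this collective-junta step compatibly with the $L^1$ norm and with the Sobolev hypothesis of Theorem~\ref{thm:medmain} is the technical heart of the proof and the step I expect to be hardest.
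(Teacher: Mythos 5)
Your proposal goes astray at the very first step: the per-coordinate sum bound $\sum_m|F_m(x)-F_m(x')|\le L\alpha|x_n-x'_n|$ does \emph{not} give each individual $F_m$ a Sobolev norm bounded uniformly in $N$ and $M$. For a fixed $m$ it yields only $\|\partial_n F_m\|_\infty\le L\alpha$ per coordinate $n$, and summing over the $N$ coordinates gives $\|\vpartial F_m\|_{L_1\ell_1^N}\le L\alpha N=LM$, which grows linearly. What the hypothesis does bound uniformly is the \emph{average} $\frac1M\sum_m\|\vpartial F_m\|_{L_1\ell_1^N}\le L/\alpha$ (one must sum the pointwise inequality over both $m$ and $n$ before normalizing). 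The missing idea, and the one the paper uses, is Chebyshev's inequality: the set $I:=\{m:\|\vpartial F_m\|_{L_1\ell_1^N}\le 2L/\alpha\eps\}$ has $|I|\ge(1-\eps/2)M$, Theorem~\ref{thm:medmain} is applied only to the $F_m$ with $m\in I$, and the few exceptional coordinates are simply replaced by constants and absorbed into the $\eps$ error budget. Without this averaging-and-exception step you cannot invoke Theorem~\ref{thm:medmain} at all.

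Second, the ``technical heart'' you identify --- producing a single common set $S$ --- is the wrong target, and it is in fact impossible. The theorem only requires that each $G_m$ depend on some $q$ coordinates; nothing forces the coordinate sets to agree across $m$, and already for the identity map $F_m(x)=x_m$ with $N=M$ one must take $S_m=\{m\}$, which rules out any common bounded $S$. The paper accordingly uses distinct $S_m$'s from the start. Your worry about the Lipschitz constant of $G=(\sfE_{S_m}F_m)_m$ when the $S_m$'s differ is a legitimate one, and you are right that the coupling argument proving $\sum_m|\partial_n G_m|\le L\alpha$ does break down when different averaging operators $\sfE_{S_m}$ are applied to different summands; but since the common-$S$ escape is blocked, this observation does not rescue your plan. (The paper's own proof in fact does not verify the $L$-Lipschitz claim on $G$ either --- it checks only the $L^1$ bound --- so you have spotted a real imprecision in the source, but that is a different matter from having a proof.)
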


The proof of Theorem~\ref{thm:smallmain} is a close relative of arguments of Kahn, Kalai and Linial~\cite{KahKalLin88}, Bourgain, Kahn, Kalai, Katznelson and Linial~\cite{BouKahKalKatLin92}, Friedgut and Kalai~\cite{FriKal96}, and Friedgut~\cite{Fri98,Fri04}, used for the analysis of Boolean functions on product spaces under various conditions.  That connection will be discussed further during the course of the proof.  Perhaps the closest predecessor of Theorems~\ref{thm:bigmain} and~\ref{thm:smallmain} is the theorem of Friedgut that a Boolean function on $\{0,1\}^N$ with controlled total influences is close, in the uniform measure, to a Boolean function depending on a controlled number of coordinates: see~\cite{Fri98}.

Theorems~\ref{thm:bigmain} and~\ref{thm:smallmain} also have relatives in other studies of concentration for product measures and $\ell_\infty$-metrics.  Several earlier works have sought conditions on a metric probability space $(X,\mu)$ under which the isoperimetric function of $(X^N,\mu^N,d^N_\infty)$ may be either estimated or determined exactly, and some quite general results are now known: see, for instance, Barthe~\cite{Barthe04} and the reference given there.  However, those results generally concern the strictly extremal behaviour of the isoperimetric problem in these spaces, for situations in which the minimizing sets can be described exactly, or in which that problem can be shown to behave well under tensorizing.  By contrast, the isoperimetric result descending from Theorem~\ref{thm:bigmain} is very crude:

\begin{thm*}
For any connected and locally connected $(X,d,\mu)$, and any $\delta,\eps > 0$, there is some $q$, depending on the space and on $\delta$ and $\eps$, such that the following holds.  If $A,B \subseteq X^N$ are such that
\[\inf\{d^N_\infty(x,y)\,|\ x\in A,\ y \in B\} \geq \delta,\]
then there are a set $S \subseteq [N]$ with $|S| \leq q$, and subsets $A',B' \subseteq X^N$ that depend only on coordinates in $S$, such that $\mu^N(A\setminus A'),\mu^N(B\setminus B') < \eps$ and 
\[\inf\{d^N_\infty(x,y)\,|\ x\in A',\ y \in B'\} \geq \delta - \eps.\]
\end{thm*}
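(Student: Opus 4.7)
The plan is to reduce the isoperimetric statement to Theorem~\ref{thm:bigmain} by encoding $A$ as a $1$-Lipschitz function and thresholding the junta approximation it furnishes.

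First dispose of the trivial case $\delta \leq \eps$, in which $\delta - \eps \leq 0$ and the distance bound is vacuous: simply take $S = \emptyset$ and $A' = B' = X^N$. Assume henceforth that $\delta > \eps$, and define the truncated distance
\[f(x) := \min\bigl(d^N_\infty(x, A),\ \delta\bigr), \qquad x \in X^N.\]
Then $f$ is $1$-Lipschitz for $d^N_\infty$, so its modulus of continuity is dominated by $\omega(r) := r$; it vanishes on $A$; and, by the separation hypothesis on $A,B$, it is identically $\delta$ on $B$. Next I would apply Theorem~\ref{thm:bigmain} to $f$ with this $\omega$ and with tolerance $\eps' := \eps^2/4$, producing $S \subseteq [N]$ whose size is bounded in terms of $X$ and $\eps$, together with the function $g := \sfE_S f$ that depends only on the coordinates in $S$, inherits the $1$-Lipschitz property (as in the remark following Theorem~\ref{thm:smallmain}), and satisfies $\|f - g\|_{L_1(\mu^N)} < \eps'$.

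With $g$ in hand, set
\[A' := \{g < \eps/2\} \qquad \text{and} \qquad B' := \{g > \delta - \eps/2\},\]
both of which depend only on the coordinates in $S$. The $1$-Lipschitz property of $g$ immediately gives $d^N_\infty(x,y) \geq g(y) - g(x) > \delta - \eps$ for any $x \in A'$ and $y \in B'$. For the measure estimates, since $f \equiv 0$ on $A$, the set $A \setminus A' = \{x \in A : g(x) \geq \eps/2\}$ is contained in $\{|f-g| \geq \eps/2\}$, which by Markov has $\mu^N$-measure at most $2\eps'/\eps < \eps$; the symmetric argument, using $f \geq \delta$ on $B$ to obtain $B \setminus B' \subseteq \{|f-g| \geq \eps/2\}$, yields the same bound for $\mu^N(B \setminus B')$.

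The argument is essentially mechanical once the proxy $f$ is introduced: the substantive input is Theorem~\ref{thm:bigmain} itself, and no separate obstacle arises in the isoperimetric deduction beyond the routine bookkeeping of choosing $\eps'$ small relative to $\eps^2$. The role of the sketch is simply to set up $f$, invoke the main theorem, and apply Markov's inequality on the two sides.
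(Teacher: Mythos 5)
Your proposal is correct and follows exactly the route indicated by the paper's sketch: truncate the distance function to get a $1$-Lipschitz proxy $f$, invoke Theorem~\ref{thm:bigmain} to approximate it by a junta $g = \sfE_S f$ (which inherits the $1$-Lipschitz bound, as the paper notes after Theorem~\ref{thm:smallmain}), and take $A'$ and $B'$ as level sets of $g$, with Markov's inequality supplying the measure estimates. You have simply filled in the routine bookkeeping that the paper explicitly declines to spell out, and the details check out.
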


Thus, in a sense, the approximate isoperimetric problem for $\ell_\infty$-product spaces may be confined to low-dimensional products.

\begin{proof}[Sketch proof]
The $1$-Lipschitz function $f(x):= \min\{\delta,\rm{dist}(x,A)\}$ has the property that $f|A = 0$ and $f|B = \delta$.  Applying Theorem~\ref{thm:bigmain} to $f$ gives a set $S$ of controlled size such that $f \approx E_S f$, and now one can read off $A'$ and $B'$ as suitable level sets of $E_S f$.
\end{proof}

This argument is likely to give a very poor quantitative dependence, and the further details are routine, so they will be omitted.  It is also similar to part of the argument of Dinur, Friedgut and Regev in~\cite{DinFriReg08} concerning the structure of independent sets in graph powers.

\subsubsection*{Acknowledgement}

An earlier version of this paper was significantly strengthened following suggestions by the anonymous referee.

\section{Proof of the special case}

For a differentiable function $f:\bbT^N \to \bbR$ and $n \leq N$, let $\partial_n f$ be the partial derivative in the $n^{\rm{th}}$ coordinate, and let
\[\vpartial f:= (\partial_n f)_{n=1}^N :\bbT^N\to \bbR^N.\]
At any point $x \in \bbT^N$ one may identify the tangent space with $\bbR^N$ in the obvious way.  As one zooms in on that point the metric $d^N_\infty$ converges to the norm of $\ell_\infty^N$.  From this it follows easily that
\begin{eqnarray}\label{eq:Lipnorm}
\rm{Lip}(f) = \sup_{x \in \bbT^N}\|\vpartial f(x)\|_{\ell_1^N} = \sup_{x \in \bbT^N}\Big(\sum_{n=1}^N|\partial_n f(x)|\Big)
\end{eqnarray}
for all $f \in \cal{C}^1(\bbT^N)$.

The method below actually gives a slightly stronger result than Theorem~\ref{thm:smallmain}.  For $p \in [1,\infty)$ and a suitably integrable function $F = (F_1,\ldots,F_N):\bbT^N\to \bbR^N$, define the norm
\[\|F\|_{L_p\ell_p^N} := \Big(\int_{\bbT^N} \|F(x)\|^p_{\ell_p^N}\,\d x\Big)^{1/p}  = \Big(\sum_{n=1}^N\int_{\bbT^N} |F_n(x)|^p\,\d x\Big)^{1/p},\]
where the integrals are with respect to the Haar probability measure on $\bbT^N$.  This norm will be used with $p$ equal to $1$ or $2$.

From each norm $\|\cdot\|_{L_p\ell_p^N}$, one may define a seminorm on $\cal{C}^1(\bbT^N)$ by 
\[f\mapsto \|\vpartial f\|_{L_p\ell_p^N}.\]
When $p = 2$, this is the homogeneous Sobolev seminorm $\|f\|_{\stackrel{\circ}{W}^{1,2}(\bbT^N)}$.

Since $\|\cdot\|_{\ell_p^N} \leq \|\cdot\|_{\ell_1^N}$ for all $p$, any $f \in \cal{C}^1(\bbT^N)$ satisfies
\[\|\vpartial f\|_{L_p\ell_p^N} \leq \rm{Lip}(f) \quad \forall p \in [1,\infty).\]
Using this and the fact that any $1$-Lipschitz function on $\bbT^N$ may be uniformly approximated by smooth $1$-Lipschitz functions, the following immediately implies Theorem~\ref{thm:smallmain}.

\begin{thm}\label{thm:medmain}
For every $\eps > 0$ there is a $q \geq 1$ such that if $f \in \cal{C}^1(\bbT^N)$ satisfies $\|\vpartial f\|_{L_1\ell_1^N} \leq 1$ and $\|f - \int f\|_2 \leq 1$, then there is some $S\subseteq [N]$ with $|S|\leq q$ such that
\[\|f - \sfE_Sf\|_1 < \eps.\]
The same result holds for functions on $[0,1]^N$, possibly with a different dependence of $q$ on $\eps$.
\end{thm}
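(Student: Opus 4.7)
The plan is to follow the general strategy of Friedgut's junta theorem~\cite{Fri98}, translated to the harmonic analysis of $\bbT^N$.  Writing $e_k(x) := \exp(2\pi\rmi k\cdot x)$ for $k \in \bbZ^N$, the crucial observation is that $\sfE_S f$ is the orthogonal projection of $f$ onto the closed span of the $e_k$ with $\rm{supp}(k)\subseteq S$, so that $\|f - \sfE_S f\|_2^2 = \sum_{\rm{supp}(k)\not\subseteq S}|\hat f(k)|^2$.  Since $\|\cdot\|_1 \leq \|\cdot\|_2$ on a probability space, it therefore suffices to produce $S$ of size $q = q(\eps)$ for which
\[\sum_{\rm{supp}(k)\not\subseteq S}|\hat f(k)|^2 < \eps^2.\]

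To identify $S$, I would introduce the $L_1$-influences $I_n := \|\partial_n f\|_1$; the hypothesis reads $\sum_n I_n \leq 1$, so for a small threshold $\tau$ the set $S := \{n: I_n \geq \tau\}$ satisfies $|S| \leq 1/\tau$ by Markov.  The elementary estimate $|\widehat{\partial_n f}(k)| \leq \|\partial_n f\|_1$, combined with $\widehat{\partial_n f}(k) = 2\pi\rmi k_n \hat f(k)$, gives for any $n \in \rm{supp}(k)\setminus S$ the pointwise bound $|\hat f(k)| < \tau/(2\pi|k_n|)$, so individual bad Fourier modes are small.  The real task is to sum these bounds over the infinite lattice of bad modes.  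I would organise this via two cutoffs $L,d$ on magnitude and support size, partitioning the bad modes into three regions: (a) modes with some $|k_n| > L$, whose aggregate $L_2$-mass is dominated by the sharper estimate $|\hat f(k)| \leq I_n/(2\pi|k_n|)$ together with Cauchy--Schwarz against the $L_2$ bound $\sum_k|\hat f(k)|^2 \leq 1$; (b) modes with $|\rm{supp}(k)| > d$, handled by hypercontractivity on $\bbT^N$ (of Bonami--Weissler type, comparing $L_p$ and $L_q$ norms of Fourier polynomials of bounded support-degree), which converts the total $L_1$-influence hypothesis into a quantitative concentration of $L_2$-mass on low-support modes, in the spirit of~\cite{KahKalLin88,BouKahKalKatLin92}; and (c) the remaining low-magnitude, low-support modes outside $S$, of which there are only finitely many (a count depending on $L,d$ but not on $N$), each contributing at most $(\tau/2\pi)^2$ and therefore controlled by taking $\tau$ very small.

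The substantive analytic obstacle is region (b): calibrating hypercontractivity in the continuous, integer-frequency setting of $\bbT^N$, and in particular quantifying how the $L_1$-gradient hypothesis (in place of the more standard $L_2$ one) forces Fourier mass onto modes of small support.  The multiplicative dependence on the hypercontractive constants is what will produce the tower-type dependence of $q$ on $\eps$ familiar from~\cite{Fri98}.  Finally, the last sentence of the statement, for $f$ on $[0,1]^N$, should follow by reflecting $f$ evenly across each coordinate face to obtain a function on a doubled torus, under which the $L_1$-gradient and centred $L_2$-norm are preserved up to absolute constants, and then applying the torus version to the extension.
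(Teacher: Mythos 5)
Your plan shares two key structural features with the paper's argument: the choice $S := \{n : \|\partial_n f\|_1 \geq \tau\}$, which gives $|S| \leq 1/\tau$ directly from the hypothesis $\|\vpartial f\|_{L_1\ell_1^N}\leq 1$, and the appeal to hypercontractivity on $\bbT^N$ (Weissler's theorem). But the Fourier-cutoff organization has a concrete flaw, and the part you flag as the ``substantive obstacle'' is not merely a calibration issue --- it is exactly what the paper resolves with a different decomposition.

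The flaw is in region (c). The number of frequencies $k \in \bbZ^N$ with $\rm{supp}(k) \not\subseteq S$, $|\rm{supp}(k)| \leq d$, and $\max_n|k_n|\leq L$ is of order $\sum_{j\leq d}\binom{N}{j}(2L)^j$, which grows polynomially in $N$ for any $d \geq 1$. It does \emph{not} depend only on $L$ and $d$. So multiplying a per-mode bound of $(\tau/2\pi)^2$ by this count gives something that cannot be made small by shrinking $\tau$ without letting $\tau$ (and hence $|S|$) depend on $N$. Region (a) is also shaky: the available per-mode bound $|\hat f(k)|\leq \|\partial_n f\|_1/(2\pi|k_n|)$ controls $\sup$-norms of Fourier coefficients, but the hypotheses give no $L_2$ control on $\vpartial f$, so the sum $\sum_{|k_n|>L}|\hat f(k)|^2$ is not evidently $\rm{o}_L(1)$; ``Cauchy--Schwarz against $\sum_k|\hat f(k)|^2 \leq 1$'' does not give a quantity that decays as $L\to\infty$. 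In effect, the $L_1$-gradient hypothesis does not let you run the three-region decomposition as stated, and the hypercontractive step in region (b) --- converting an $L_1$-influence bound into $L_2$ Fourier-mass concentration --- is left unaddressed.

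The paper sidesteps all of this by never decomposing the Fourier spectrum by hand. Instead it approximates $f$ by the heat-evolved $P_{2t}f$ and uses the sharp smoothing estimate $\|f - P_tf\|_1 \leq \sqrt t\,\|\vpartial f\|_{L_1\ell_1^N}$ (Lemma~\ref{lem:Lipbound}), which is exactly the mechanism that lets the $L_1$-gradient bound control the high-frequency mass --- replacing your regions (a) and (c). Then for $\|P_{2t}f - \sfE_S P_{2t}f\|_2$, the Poincar\'e inequality of Lemma~\ref{lem:vceest} reduces to bounding $\sum_{n\notin S}\|P_{2t}\partial_n f\|_2^2$, and the chain ``hypercontractivity $\Rightarrow$ $\|P_{2t}\partial_n f\|_2 \leq \|\partial_n P_t f\|_{1+\rme^{-2t}}$ $\Rightarrow$ log-convexity to interpolate between $\|\cdot\|_1$ and $\|\cdot\|_2$, H\"older, the reverse Poincar\'e inequality $\|\vpartial P_t f\|_{L_2\ell_2^N} \leq \|f\|_2/\sqrt t$'' is what converts the $L_1$-gradient hypothesis into the needed $L_2$ smallness, using the $\|f-\int f\|_2\leq 1$ hypothesis through the reverse Poincar\'e factor. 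This is the quantitative substitute for your region (b), and it is where the two $L_1$ and $L_2$ hypotheses interact. To repair your approach you would need genuine analogues of both the heat-smoothing bound and the reverse Poincar\'e step; the remaining machinery (choice of $S$, hypercontractivity, reduction from $\|\cdot\|_1$ to $\|\cdot\|_2$, the reflection/tent-map trick for $[0,1]^N$) is sound and aligned with the paper.
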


Most of the work below will go into the first part of Theorem~\ref{thm:medmain}, and that is also the case that will imply Theorem~\ref{thm:bigmain}.  The result for $[0,1]^N$ is an easy corollary, but is included here because it is needed for Theorem~\ref{thm:biLiprig}.

Now let $\Delta:= \sum_{n\leq N}\partial_n^2$, the non-positive definite Laplacian.  Let $P_t := \exp (t \Delta)$, $t \geq 0$, be the resulting heat semigroup, which is well-defined and self-adjoint on $L_2(\bbT^N)$.  As is standard, this is the semigroup corresponding to Brownian motion on $\bbT^N$. For each $t > 0$ it may be written explicitly in terms of the Gaussian measure  $\g_t$ on $\bbR$ with variance $t$:
\begin{eqnarray}\label{eq:rep-P}
P_tf(x) = \int_{\bbR^N}f(x+\ol{y})\,\g^{\otimes N}_t(\d y),
\end{eqnarray}
where $\ol{y}$ denotes the image of $y$ under the obvious quotient map $\bbR^N\to \bbT^N$.

Every $P_t$ is a contraction for the norm $\|\cdot\|_{L_p\ell_p^N}$ for any $p \in [1,\infty]$.  A simple calculation gives $\partial_n P_t f = P_t \partial_n f$ for all $t$, $n$ and differentiable $f$, and hence
\begin{eqnarray}\label{eq:Lipineq}
\|\vpartial P_t f\|_{L_p\ell_p^N} \leq \|\vpartial f\|_{L_p\ell_p^N}
\end{eqnarray}
for any $p$, $f$ and $t \geq 0$.

The proof of Theorem~\ref{thm:medmain} follows similar lines to the proofs of the main results in~\cite{KahKalLin88,BouKahKalKatLin92,FriKal96,Fri98,Fri04}, concerning the structure of Boolean functions with low total influences on $\{0,1\}^N$ and $[0,1]^N$.  Note, however, that the obvious analog of Theorem~\ref{thm:smallmain} for Boolean functions --- that Boolean functions with small total influences are close to juntas --- is false for functions on $[0,1]^N$: see Hatami~\cite{Hatami09,Hatami12} for the precise structure in that case.
\begin{itemize}
\item On the one hand, if $\|\vpartial f\|_{L_1\ell_1^N} \leq 1$, then the evolution $t\mapsto P_t f$ is not too fast in the norm $\|\cdot\|_1$, so that for small $t$ one has $P_tf \approx f$.

\item On the other hand, a hypercontractivity inequality for the semigroup $(P_t)_{t\geq 0}$ implies that if $\|\partial_n f\|_1 \ll 1$ for some $n$, then $\|\partial_n P_t f\|_2$ rapidly decays towards zero.  This will occur so fast that even for small $t$, the derivatives $\partial_n P_t f$ are extremely small in $\|\cdot\|_2$ for all but a few choices of $n$, so that $P_t f$ is close to a function that depends on only those few coordinates.
\end{itemize}
Thus, one may first approximate $f$ by $P_t f$ for some small $t > 0$, and then prove that $P_tf$ is close to a function of only a few coordinates.

The first of these two steps results from the following.

\begin{lem}\label{lem:Lipbound}
If $f \in \cal{C}^1(\bbT^N)$ then
\[\|f - P_tf\|_1 \leq \sqrt{t}\|\vpartial f\|_{L_1\ell_1^N} \quad \forall t \geq 0.\]
\end{lem}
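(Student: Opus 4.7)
The plan is to write $f - P_t f$ using the Gaussian representation~(\ref{eq:rep-P}), control each increment $f(x + \bar y) - f(x)$ by a line integral of $\vpartial f$, and then exploit the translation invariance of Haar measure on $\bbT^N$ to relate everything back to $\|\vpartial f\|_{L_1\ell_1^N}$.

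\medskip

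\noindent\textbf{Step 1.} From~(\ref{eq:rep-P}),
\[f(x) - P_tf(x) = \int_{\bbR^N}\big(f(x) - f(x+\ol y)\big)\,\g_t^{\otimes N}(\d y),\]
so by the fundamental theorem of calculus applied to the smooth path $s \mapsto x + s\ol y$ on $\bbT^N$,
\[f(x) - f(x+\ol y) = -\int_0^1 \vpartial f(x+s\ol y)\cdot y\,\d s.\]

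\noindent\textbf{Step 2.} Taking absolute values, then the $\|\cdot\|_1$-norm over $\bbT^N$, and applying Fubini,
\[\|f - P_tf\|_1 \leq \int_{\bbR^N}\int_0^1\sum_{n=1}^N |y_n|\int_{\bbT^N}|\partial_n f(x+s\ol y)|\,\d x\,\d s\,\g_t^{\otimes N}(\d y).\]

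\noindent\textbf{Step 3.} Translation invariance of the Haar measure on $\bbT^N$ gives
\[\int_{\bbT^N}|\partial_n f(x+s\ol y)|\,\d x = \|\partial_n f\|_1\]
independently of $s$ and $y$, so the inner two integrals simplify and
\[\|f - P_tf\|_1 \leq \sum_{n=1}^N \|\partial_n f\|_1 \int_{\bbR^N}|y_n|\,\g_t^{\otimes N}(\d y).\]

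\noindent\textbf{Step 4.} The coordinate $y_n$ under $\g_t^{\otimes N}$ is a centred Gaussian of variance $t$, whose absolute first moment is $\sqrt{2t/\pi}\leq \sqrt{t}$. Combining with the definition of $\|\vpartial f\|_{L_1\ell_1^N}$ yields the stated bound.

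\medskip

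There is no real obstacle here: the result follows from the Gaussian representation of $P_t$ together with translation invariance, and the $\sqrt{t}$ factor is simply the first absolute moment of a one-dimensional Gaussian of variance $t$. The only mild subtlety is justifying the line-integral formula for the lifted increment, but this is immediate since $f \in \mathcal{C}^1(\bbT^N)$ and the path $s\mapsto x+s\ol y$ is smooth for every fixed $y \in \bbR^N$.
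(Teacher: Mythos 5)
Your proof is correct and follows essentially the same strategy as the paper: the Gaussian representation of $P_t$, the fundamental theorem of calculus along the path $s\mapsto x+s\ol y$, and translation invariance of Haar measure. The only difference is in bounding the final Gaussian integral of $\sum_n y_n\partial_n f$: the paper applies Cauchy--Schwarz and independence to pass through $\|\vpartial f\|_{L_1\ell_2^N}$ with exact variance $t$, whereas you apply the triangle inequality coordinate-by-coordinate and use the first absolute moment $\sqrt{2t/\pi}$, which is marginally simpler and in fact yields the slightly sharper constant $\sqrt{2t/\pi}$ in place of $\sqrt t$.
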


\begin{proof}
This is easily deduced from the representation~(\ref{eq:rep-P}), which gives
\[\|f - P_tf\|_1 \leq \int_{\bbT^N} \int_{\bbR^N}|f(x) - f(x + \ol{y})|\,\g_t^{\otimes N}(\d y)\,\d x.\]
By the Fundamental Theorem of Calculus and a change of variables, this last bound equals
\begin{multline*}
\int_{\bbT^N} \int_{\bbR^N}\Big|\int_0^1 \frac{\d \phantom{s}}{\d s}f(x + \ol{sy})\,\d s\Big|\,\g_t^{\otimes N}(\d y)\,\d x\\ \leq \int_{\bbT^N} \int_0^1 \int_{\bbR^N}\Big|\sum_{n=1}^N y_n\partial_nf(x + \ol{sy})\Big|\,\g_t^{\otimes N}(\d y)\,\d s\,\d x\\
= \int_{\bbT^N} \int_{\bbR^N}\Big|\sum_{n=1}^N y_n\partial_nf(x')\Big|\,\g_t^{\otimes N}(\d y)\,\d x'.
\end{multline*}
By the monotonicity of Lebesgue norms, this is at most
\[\int_{\bbT^N} \Big(\int_{\bbR^N}\Big(\sum_{n=1}^N y_n\partial_nf(x')\Big)^2\,\g_t^{\otimes N}(\d y)\Big)^{1/2}\,\d x',\]
and now, since the coordinates $y_n$ of $y$ are independent under $\g_t^{\otimes N}$, this bound equals
\[\Big(\int_{\bbR^N}y_1^2\,\g_t(\d y_1)\Big)^{1/2}\int_{\bbT^N} \Big(\sum_{n=1}^N |\partial_nf(x')|^2 \Big)^{1/2}\,\d x' = \sqrt{t}\int_{\bbT^N} \|\vec{\partial}f(x')\|_{\ell_2^N}\,\d x'.\]
Finally, this is at most $\sqrt{t}\|f\|_{L_1\ell_2^N}$, since $\|\,\cdot\,\|_{\ell_2^N} \leq \|\,\cdot\,\|_{\ell_1^N}$.
\end{proof}

For the second part of the proof, the key ingredients are a reverse Poincar\'e inequality and a hypercontractive estimate for the heat semigroup on a torus.

\begin{prop}[Reverse Poincar\'e inequality]\label{prop:reverse}
If $f \in \cal{C}^1(\bbT^N)$ then
\[\|\vec{\partial}P_t f\|_{L_2\ell_2^N} \leq \|f\|_2/\sqrt{t} \quad \forall t > 0.\]
\end{prop}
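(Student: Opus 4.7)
The plan is to run the standard energy-dissipation argument for the heat semigroup and then use the monotonicity of $s \mapsto \|\vec\partial P_s f\|_{L_2\ell_2^N}$ that is already built into inequality~(\ref{eq:Lipineq}). Concretely, for $t > 0$ the function $P_tf$ is smooth, so one can differentiate $\|P_tf\|_2^2$ and use the heat equation $\partial_t P_tf = \Delta P_tf$ together with integration by parts on $\bbT^N$. Since $\langle \partial_n^2 g, g\rangle_{L_2} = -\|\partial_n g\|_2^2$ for each coordinate $n$, summing over $n$ gives the energy identity
\[\frac{\d\phantom{s}}{\d s}\|P_sf\|_2^2 = 2\langle \Delta P_sf, P_sf\rangle = -2\|\vec\partial P_sf\|_{L_2\ell_2^N}^2.\]
Integrating from $0$ to $t$ and discarding the nonnegative term $\|P_tf\|_2^2$ yields
\[\int_0^t \|\vec\partial P_sf\|_{L_2\ell_2^N}^2\,\d s = \tfrac{1}{2}\bigl(\|f\|_2^2 - \|P_tf\|_2^2\bigr) \leq \tfrac{1}{2}\|f\|_2^2.\]

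Next, I would note that $s \mapsto \|\vec\partial P_sf\|_{L_2\ell_2^N}$ is non-increasing: for $s \leq t$, since $\partial_n$ commutes with the semigroup, $\vec\partial P_tf = P_{t-s}\vec\partial P_sf$ componentwise, and~(\ref{eq:Lipineq}) applied with $p=2$ (i.e.\ the componentwise $L_2$-contractivity of $P_{t-s}$) gives $\|\vec\partial P_tf\|_{L_2\ell_2^N} \leq \|\vec\partial P_sf\|_{L_2\ell_2^N}$. Therefore
\[t\,\|\vec\partial P_tf\|_{L_2\ell_2^N}^2 \leq \int_0^t \|\vec\partial P_sf\|_{L_2\ell_2^N}^2\,\d s \leq \tfrac{1}{2}\|f\|_2^2,\]
which implies $\|\vec\partial P_tf\|_{L_2\ell_2^N} \leq \|f\|_2/\sqrt{2t}$, a slight strengthening of the claimed bound.

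There isn't really a hard step here; the only point requiring mild care is justifying the term-by-term differentiation and the integration by parts, which is routine since $P_tf$ is $C^\infty$ for $t>0$ (alternatively one could work directly on the Fourier side, where the whole statement reduces to the elementary inequality $ue^{-u}\le 1$). The conceptual content is just the observation that the Dirichlet energy $\|\vec\partial P_sf\|_{L_2\ell_2^N}^2$ is both the dissipation rate of $\|P_sf\|_2^2$ and a non-increasing function of $s$, so its value at time $t$ is controlled by its time-average on $[0,t]$.
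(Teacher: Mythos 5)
Your proof is correct and in fact gives the stronger constant $\|\vec\partial P_t f\|_{L_2\ell_2^N}\le\|f\|_2/\sqrt{2t}$. The route is genuinely different from the paper's. The paper argues in one shot: by self-adjointness of $P_t$ and integration by parts, $\|\vec\partial P_t f\|_{L_2\ell_2^N}^2 = \langle f,(-\Delta)\rme^{2t\Delta}f\rangle$, and then the Spectral Theorem together with the crude bound $\sup_{x\ge 0}x\rme^{-2tx}\le 1/t$ and Cauchy--Schwarz give the claim directly. You instead use the classical energy-dissipation identity $\frac{\d}{\d s}\|P_sf\|_2^2=-2\|\vec\partial P_sf\|_{L_2\ell_2^N}^2$, integrate in time, and then invoke the monotonicity of the Dirichlet energy $s\mapsto\|\vec\partial P_sf\|_{L_2\ell_2^N}$ (which, as you correctly note, is exactly the $p=2$ case of~(\ref{eq:Lipineq}) after commuting $\vec\partial$ through the semigroup) to replace the time-average by the endpoint value. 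On the Fourier side both arguments ultimately rest on an elementary bound on $u\mapsto u\rme^{-u}$, so the computational content is similar; but your version is the more structural, ``semigroup-theoretic'' form of the reverse Poincar\'e inequality, it avoids an explicit appeal to the Spectral Theorem, it generalizes verbatim to other Markov diffusion semigroups where a Fourier picture is unavailable, and it improves the constant by a factor $\sqrt 2$. The one point you flag as needing care --- justifying the term-by-term differentiation and the integration by parts --- is indeed routine here since $P_sf$ is smooth for $s>0$ and one can pass to the limit $s\to 0^+$ using $f\in\cal{C}^1$; this is no heavier than what the paper's own integration by parts requires.
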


\begin{proof}
For any $f \in \cal{C}^1(\bbT^N)$, the self-adjointness of $P_t$ and integration by parts give
\begin{multline*}
\|\vec{\partial}P_t f\|^2_{L_2\ell_2^N} = \sum_{n=1}^N \int_{\bbT^N} (\partial_n P_tf(x))^2\,\d x = -\sum_{n=1}^N \int_{\bbT^N} P_tf(x)\cdot \partial^2_n P_tf(x)\,\d x\\ = -\sum_{n=1}^N \int_{\bbT^N} \rme^{t\Delta}f(x)\cdot \partial^2_n \rme^{t\Delta}f(x)\,\d x =\int_{\bbT^n} f(x)\cdot (-\Delta)\rme^{2t\Delta}f(x)\,\d x.
\end{multline*}
Since $-\Delta$ has spectrum contained in $[0,\infty)$, the Spectral Theorem gives
\[\|(-\Delta)\rme^{2t\Delta}f\|_2 \leq (\max_{x \in [0,\infty)}x\rme^{-2tx})\|f\|_2 \leq \|f\|_2/t.\]
Substituting above, the Cauchy--Bunyakowski--Schwartz inequality gives
\[\|\vec{\partial}P_t f\|^2_{L_2\ell_2^N}  \leq \|f\|_2\|(-\Delta)\rme^{2t\Delta}f\|_2\leq \|f\|_2^2/t.\]
\end{proof}

\begin{prop}[Hypercontractivity on tori]\label{prop:hypercon}
Let $t > 0$ and $p := 1 + \rm{e}^{-2t}\in (1,2)$. If $f \in L_p(\bbT^N)$, then
\[\|P_t f\|_2 \leq \|f\|_p.\]
\end{prop}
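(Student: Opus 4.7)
My plan is to reduce to the one-dimensional case by tensorization and then to prove the one-dimensional statement via the classical equivalence (Gross's theorem) between hypercontractivity and the logarithmic Sobolev inequality.

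First, the representation~(\ref{eq:rep-P}) immediately implies that $P_t$ on $\bbT^N$ is the $N$-fold tensor power of the one-dimensional heat semigroup $P_t^{(1)}$ on $\bbT$: the Gaussian $\g_t^{\otimes N}$ is a product measure, and translation by the $n^{\rm{th}}$ coordinate of $\ol{y}$ affects only the $n^{\rm{th}}$ coordinate of $x$. Each $P_t^{(1)}$ is a positivity-preserving Markov operator. For such operators, an $L_p\to L_q$ contraction estimate with $1\leq p\leq q$ tensorizes to the $N$-fold product, by iterated application of Minkowski's integral inequality (integrate out one coordinate at a time, apply the one-dimensional bound in the inner norm, then interchange the outer $L_q$ and inner $L_p$ norms). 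Hence the proposition reduces to the case $N=1$.

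Second, for $N=1$ I would invoke Gross's theorem, which asserts that the bound $\|P_t^{(1)}f\|_2\leq \|f\|_{1+\rme^{-2t}}$ for all $t>0$ and all $f$ is equivalent to the logarithmic Sobolev inequality
\[\int f^2 \log\frac{f^2}{\|f\|_2^2}\,\d x \leq 2 \int (f')^2\,\d x\]
for the Haar measure on $\bbT$ together with the Dirichlet form associated to $\Delta$. The LSI on the circle is a classical fact; it can be derived, for instance, from the sharp Gaussian LSI of Gross by a periodization argument (lift $f$ to a $\bbZ$-periodic function on $\bbR$ and compare the heat semigroup on $\bbR$ with its periodization, which is exactly $P_t^{(1)}$), or alternatively from the Bakry--\'Emery framework applied to the flat circle together with its finite diameter.

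The main obstacle is verifying the \emph{exact} constant in the exponent, namely the $2$ that yields $p=1+\rme^{-2t}$. This requires careful tracking of the normalizations of $\Delta$ and of the variance of $\g_t$ fixed in~(\ref{eq:rep-P}); with those in place, the desired constant follows from the sharp LSI on $\bbT$. No new conceptual ingredient beyond the Gross--Nelson theory of hypercontractivity is required, so the proof is essentially bookkeeping built on the tensorization step and one invocation of a standard LSI on the circle.
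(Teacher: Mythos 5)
Your proposal follows essentially the same route as the paper: the paper reduces to $N=1$ by tensorization and then cites Weissler's hypercontractive estimate on the circle, which is itself proved by establishing a logarithmic Sobolev inequality on $\bbT$ and integrating along the semigroup in Gross's manner — exactly the strategy you outline. The Minkowski-inequality argument you give for tensorization is the standard one (and, as a small aside, needs only $p\le q$, not positivity preservation); your remaining hesitation about normalizations is precisely what the citation to Weissler resolves.
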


\begin{proof}
In case $N=1$, this is a special case of Weissler's hypercontractive estimates in~\cite[Theorem 2]{Weissler80}.  Given this, the result for general $N$ follows because the operator $P_t$ is a tensor product of one-dimensional operators, and all the norms $\|\cdot\|_p$ also tensorize.
\end{proof}

Weissler's proof of the case $N=1$ closely follows Gross' famous work~\cite{Gross75} on the Ornstein-Uhlenbeck semigroup and Nelson's hypercontractive estimates.  The main task is to prove a logarithmic Sobolev inequality for the Laplacian on the circle; this can then be integrated over a time interval to give hypercontractivity.  Weissler's proof of the logarithmic Sobolev inequality uses Fourier analysis, but a more direct argument from standard properties of the heat semigroup is also possible: c.f.~\cite[Theorem 5.1]{Led--book}.

Lastly, the proof will also need the following simple Poincar\'e inequality.

\begin{lem}\label{lem:vceest}
For any $S \subseteq [N]$ and $f \in \cal{C}^1(\bbT^N)$ one has
\[\|f - \sfE_Sf\|^2_2 \leq \sum_{n \in [N]\setminus S}\|\partial_n f\|_2^2.\]
\end{lem}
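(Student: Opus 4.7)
The plan is to prove this Poincaré-type inequality by Fourier analysis on $\bbT^N$, using the fact that the conditional expectation $\sfE_S$ has a transparent description as a projection on the Fourier side.

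First, I expand $f \in \cal{C}^1(\bbT^N) \subseteq L_2(\bbT^N)$ in a Fourier series,
\[f(x) = \sum_{k \in \bbZ^N} \hat{f}(k)\,\rme^{2\pi \rmi k\cdot x},\]
and note two standard consequences of Parseval: $\|\partial_n f\|_2^2 = \sum_{k} (2\pi k_n)^2 |\hat{f}(k)|^2$ for each $n$, and the condition $k_n = 0$ for all $n \notin S$ exactly cuts out the Fourier modes on which $\sfE_S$ is the identity. In other words, $\sfE_S f$ is the orthogonal projection onto $\rm{span}\{\rme^{2\pi\rmi k\cdot x} : k_n = 0 \ \forall n\notin S\}$, so
\[\|f - \sfE_S f\|_2^2 \;=\; \sum_{k : k_n \neq 0 \text{ for some } n\notin S} |\hat{f}(k)|^2.\]

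Next, I compare the two sides mode by mode. For any $k$ with at least one coordinate $n\notin S$ satisfying $k_n \neq 0$, one has
\[\sum_{n \in [N]\setminus S} (2\pi k_n)^2 \;\geq\; (2\pi)^2 \;\geq\; 1.\]
Summing $|\hat{f}(k)|^2$ against this inequality over all such $k$, and then freely including the (nonnegative) contributions from the remaining $k$'s, yields
\[\|f - \sfE_S f\|_2^2 \;\leq\; \sum_{k \in \bbZ^N}\Big(\sum_{n \in [N]\setminus S} (2\pi k_n)^2\Big)|\hat{f}(k)|^2 \;=\; \sum_{n \in [N]\setminus S}\|\partial_n f\|_2^2,\]
which is the claim.

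There is no real obstacle here: the whole argument is just Parseval plus the trivial lower bound $(2\pi)^2 \geq 1$ on the spectrum of $-\partial^2$ on $\bbT$ restricted to mean-zero functions. (Equivalently, one could prove this by tensorizing the one-dimensional Poincaré inequality on $\bbT$ and applying it iteratively to integrate out each coordinate $n \notin S$, using the commutativity of the $\sfE_{[N]\setminus\{n\}}$'s; but the Fourier version is the most compact way to write it.)
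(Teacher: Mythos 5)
Your proof is correct, and it takes a genuinely different route from the paper's. The paper proves the single-coordinate case $S = [N]\setminus\{n\}$ by applying the one-dimensional Poincar\'e inequality on $\bbT$ to each slice of $f$ and integrating; it then handles general $S$ by iteratively integrating out the coordinates in $[N]\setminus S$, using the commutativity $\partial_n\sfE_S = \sfE_S\partial_n$ and the fact that the sequence $f_0 = f$, $f_k = \sfE_{[N]\setminus\{n_k\}}f_{k-1}$ is a reverse martingale, so that $\|f - \sfE_S f\|_2^2 = \sum_k \|f_{k-1} - f_k\|_2^2$. This is exactly the ``tensorize and iterate'' argument you sketch parenthetically at the end. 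Your main argument instead diagonalizes everything at once on the Fourier side: $\sfE_S$ is the projection onto modes supported in $\bbZ^S$, $\|\partial_n f\|_2^2 = \sum_k (2\pi k_n)^2|\hat f(k)|^2$, and the inequality follows from the observation that $\sum_{n\notin S}(2\pi k_n)^2 \geq (2\pi)^2 \geq 1$ whenever $k$ lies outside $\bbZ^S$. Both are clean proofs. The Fourier version is the most transparent for $\bbT^N$ specifically (and even gives the sharper constant $(2\pi)^{-2}$ on the right-hand side for free). The paper's slice-and-martingale argument is structured so that it transfers verbatim to any product $(X,\mu)^N$ once one has a one-dimensional Poincar\'e inequality for $(X,\mu)$ -- which matches the paper's broader remark that the whole method should generalize to Markov diffusion semigroups beyond the torus.
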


\begin{proof}
If $S = [N]\setminus \{n\}$, then the desired inequality is
\[\|f - \sfE_Sf\|_2^2 \leq \|\partial_n f\|_2^2.\]
This follows by applying the Poincar\'e inequality on $\bbT$ to each of the one-dimensional slices
\[f(x_1,\ldots,x_{n-1},\,\cdot\,,x_{n+1},\ldots,x_N), \quad (x_1,\ldots,x_{n-1},x_{n+1},\ldots,x_N) \in \bbT^{N-1},\]
and then integrating over $\bbT^{N-1}$.

For the general case, observe that for any $n$ and $S$, the operators $\partial_n$ and $\sfE_S$ commute, and so one has
\[\|\partial_n \sfE_Sf\|_2 = \|\sfE_S(\partial_nf)\|_2 \leq \|\partial_n f\|_2.\]
Now, if $S = [N]\setminus \{n_1,\ldots,n_M\}$ and one defines $f_0 := f$ and then $f_k := \sfE_{[N]\setminus \{n_k\}}f_{k-1}$ for $k=1,\ldots,M$, then this sequence is a reverse martingale, so we have
\[\|f - \sfE_Sf\|^2_2 = \sum_{k=1}^M\|f_{k-1} - f_k\|_2^2\leq \sum_{k=1}^M\|\partial_{n_k} f_{k-1}\|_2^2 \leq  \sum_{n \in [N]\setminus S}\|\partial_n f\|_2^2.\]
\end{proof}

\begin{lem}\label{lem:approxatpost}
Suppose that $f \in \cal{C}^1(\bbT^N)$ has $\|\vpartial f\|_{L_1\ell_1^N} \leq 1$ and $\|f\|_2 \leq 1$.  Fix $t > 0$ and $\eta > 0$, and let
\[S := \{n \in [N]\,|\ \|\partial_nf\|_1 \geq \eta\}.\]
Then
\[\|P_{2t}f - \sfE_SP_{2t}f\|_2 < t^{-\frac{\rm{e}^{-2t}}{1 + \rm{e}^{-2t}}} \cdot \eta^{\frac{1 - \rm{e}^{-2t}}{2(1 + \rm{e}^{-2t})}}.\]
\end{lem}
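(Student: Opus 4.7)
The plan is to combine Lemma~\ref{lem:vceest}, hypercontractivity (Proposition~\ref{prop:hypercon}), log-convexity of $L_q$-norms, and the reverse Poincar\'e inequality (Proposition~\ref{prop:reverse}). First, applying Lemma~\ref{lem:vceest} to the smooth function $P_{2t}f$ and the set $S$ reduces matters to bounding
\[\|P_{2t}f - \sfE_S P_{2t}f\|_2^2 \leq \sum_{n\notin S}\|\partial_n P_{2t}f\|_2^2.\]

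Next, for each individual $n$, since $\partial_n$ commutes with $P_t$, I would write $\partial_n P_{2t}f = P_t(\partial_n P_t f)$ and invoke Proposition~\ref{prop:hypercon} with $p := 1 + \rme^{-2t} \in (1,2)$ to obtain $\|\partial_n P_{2t}f\|_2 \leq \|\partial_n P_t f\|_p$. Log-convexity of $q\mapsto \log\|g\|_q$ as a function of $1/q$, applied between the endpoints $q=1$ and $q=2$ with $\frac{1}{p} = \frac{2-p}{p}\cdot 1 + \frac{2(p-1)}{p}\cdot\frac{1}{2}$, yields the standard interpolation
\[\|g\|_p^p \leq \|g\|_1^{2-p}\,\|g\|_2^{2(p-1)}.\]
Raising to the $2/p$ power and combining with hypercontractivity gives, for each $n$,
\[\|\partial_n P_{2t}f\|_2^2 \leq \|\partial_n P_t f\|_1^{2(2-p)/p}\,\|\partial_n P_t f\|_2^{4(p-1)/p}.\]

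The crucial manoeuvre is now to split the $L_1$-power evenly as $\|\partial_n P_t f\|_1^{(2-p)/p} \cdot \|\partial_n P_t f\|_1^{(2-p)/p}$ and, for $n \notin S$, to pointwise bound the first factor by $\eta^{(2-p)/p}$, using the definition of $S$ together with $L_1$-contractivity of $P_t$ (so $\|\partial_n P_t f\|_1 \leq \|\partial_n f\|_1 < \eta$). Summing over $n \notin S$ and applying H\"older's inequality with conjugate exponents $p/(2-p)$ and $p/(2(p-1))$, which are indeed conjugate since $(2-p)/p + 2(p-1)/p = 1$, reduces the bound to the two global norms
\[\sum_n \|\partial_n P_t f\|_1 \leq \|\vpartial f\|_{L_1\ell_1^N} \leq 1 \quad\text{(from~(\ref{eq:Lipineq}) at $p=1$)}\]
and
\[\sum_n \|\partial_n P_t f\|_2^2 \leq \|f\|_2^2/t \leq 1/t \quad\text{(from Proposition~\ref{prop:reverse})}.\]
Putting these together yields $\sum_{n\notin S}\|\partial_n P_{2t}f\|_2^2 \leq \eta^{(2-p)/p}\cdot t^{-2(p-1)/p}$; substituting $p = 1 + \rme^{-2t}$ and taking square roots gives exactly the claimed inequality.

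The main obstacle is choosing the splitting of the interpolation bound so that (i) a factor of $\eta$ cleanly peels off for $n \notin S$, and (ii) the leftover product is controlled \emph{precisely} by the two global bounds available, via H\"older with conjugate exponents. Once the correct even split is chosen, the exponent arithmetic is forced by $1/a + 1/b = 1$ together with the value of $p$, and no further input is needed.
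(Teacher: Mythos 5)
Your proposal is correct and follows essentially the same route as the paper: Lemma~\ref{lem:vceest} for $P_{2t}f$, hypercontractivity (Proposition~\ref{prop:hypercon}) combined with log-convexity of $L_q$-norms, H\"older's inequality over $n\notin S$ with the conjugate pair $\bigl(\tfrac{p}{2-p},\tfrac{p}{2(p-1)}\bigr)$, $L_1$-contractivity of $P_t$, and the reverse Poincar\'e inequality (Proposition~\ref{prop:reverse}). The only cosmetic difference is that you extract the factor $\eta^{(2-p)/p}$ termwise before applying H\"older, whereas the paper applies H\"older first (arriving at $\sum_{n\notin S}\|\partial_n P_t f\|_1^2$) and then peels $\eta$ off that sum; both orderings yield exactly the same exponents and the same final bound.
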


\begin{proof}
Proposition~\ref{prop:hypercon} and the log-convexity of the Lebesgue norms give
\[\|P_{2t}(\partial_n f)\|_2 = \|P_t(\partial_n P_tf)\|_2\leq \|\partial_n P_t f\|_{1 + \rm{e}^{-2t}} \leq \|\partial_n P_tf\|_2^{\frac{2\rm{e}^{-2t}}{1 + \rm{e}^{-2t}}}\|\partial_n P_tf\|_1^{\frac{1 - \rm{e}^{-2t}}{1 + \rm{e}^{-2t}}}.\]
Substituting this into Lemma~\ref{lem:vceest} gives
\begin{multline*}
\|P_{2t} f - \sfE_SP_{2t} f\|_2^2 \leq \sum_{n \in [N]\setminus S}\|P_{2t}(\partial_n f)\|_2^2 \leq \sum_{n \in [N]\setminus S}\|\partial_n P_tf\|_2^{\frac{4\rm{e}^{-2t}}{1 + \rm{e}^{-2t}}}\|\partial_n P_tf\|_1^{\frac{2(1 - \rm{e}^{-2t})}{1 + \rm{e}^{-2t}}}\\
\leq \Big(\sum_{n \in [N]\setminus S}\|\partial_n P_tf\|_2^2\Big)^{\frac{2\rm{e}^{-2t}}{1 + \rm{e}^{-2t}}}\Big(\sum_{n \in [N]\setminus S}\|\partial_n P_tf\|_1^2\Big)^{\frac{1 - \rm{e}^{-2t}}{1 + \rm{e}^{-2t}}},
\end{multline*}
where the last bound follows from H\"{o}lder's Inequality with exponents $(\frac{1 + \rm{e}^{-2t}}{2\rm{e}^{-2t}},\frac{1 + \rm{e}^{-2t}}{1 - \rm{e}^{-2t}})$.  Applying the contractivity of $P_t$ and then the definition of $S$ to the second factor, we may now bound this by
\begin{multline*}
\Big(\sum_{n \in [N]\setminus S}\|\partial_n P_tf\|_2^2\Big)^{\frac{2\rm{e}^{-2t}}{1 + \rm{e}^{-2t}}}\Big(\sum_{n \in [N]\setminus S}\|\partial_n f\|_1^2\Big)^{\frac{1 - \rm{e}^{-2t}}{1 + \rm{e}^{-2t}}}\\
\leq \Big(\sum_{n \in [N]\setminus S}\|\partial_n P_tf\|_2^2\Big)^{\frac{2\rm{e}^{-2t}}{1 + \rm{e}^{-2t}}}\Big(\eta\sum_{n \in [N]\setminus S}\|\partial_n f\|_1\Big)^{\frac{1 - \rm{e}^{-2t}}{1 + \rm{e}^{-2t}}}\\
\leq \eta^{\frac{1 - \rm{e}^{-2t}}{1 + \rm{e}^{-2t}}}(\|\vec{\partial}P_tf\|_{L_2\ell_2^N}^2)^{\frac{2\rm{e}^{-2t}}{1 + \rm{e}^{-2t}}}(\|\vec{\partial} f\|_{L_1\ell_1^N})^{\frac{1 - \rm{e}^{-2t}}{1 + \rm{e}^{-2t}}}.
\end{multline*}
Finally, our assumptions on $f$ and Proposition~\ref{prop:reverse} bound this by
\[\eta^{\frac{1 - \rm{e}^{-2t}}{1 + \rm{e}^{-2t}}}(t^{-1}\|f\|^2_2)^{\frac{2\rm{e}^{-2t}}{1 + \rm{e}^{-2t}}}(\|\vec{\partial} f\|_{L_1\ell_1^N})^{\frac{1 - \rm{e}^{-2t}}{1 + \rm{e}^{-2t}}} \leq t^{-\frac{2\rm{e}^{-2t}}{1 + \rm{e}^{-2t}}}\cdot \eta^{\frac{1 - \rm{e}^{-2t}}{1 + \rm{e}^{-2t}}}.\]
Taking square roots completes the proof.
\end{proof}

\begin{proof}[Proof of Theorem~\ref{thm:medmain}]
Consider $f \in \cal{C}^1(\bbT^N)$.  Replacing $f$ with $f - \int f$ if necessary, we may assume that $\int f = 0$.  Let $t,\eta > 0$, and let $S$ be as in Lemma~\ref{lem:approxatpost}.  Combining Lemmas~\ref{lem:Lipbound} and~\ref{lem:approxatpost} gives
\begin{multline*}
\|f - \sfE_Sf\|_1 \leq \|f - P_{2t} f\|_1 + \|P_{2t} f - \sfE_SP_{2t}f\|_1 + \|\sfE_S(f - P_{2t} f)\|_1 \\ \leq 2\|f - P_{2t}f\|_1 + \|P_{2t} f - \sfE_SP_{2t}f\|_2 \leq 2\sqrt{2t} + t^{-\frac{\rm{e}^{-2t}}{1 + \rm{e}^{-2t}}} \cdot \eta^{\frac{1 - \rm{e}^{-2t}}{2(1 + \rm{e}^{-2t})}}.
\end{multline*}

For any $\eps > 0$, choose $t \in (0,\eps^2/32)$, so that
\[2\sqrt{2t} < \eps/2;\]
then choose $\eta > 0$ so small that
\[t^{-\frac{\rm{e}^{-2t}}{1 + \rm{e}^{-2t}}} \cdot \eta^{\frac{1 - \rm{e}^{-2t}}{2(1 + \rm{e}^{-2t})}} < \eps/2.\]
For this choice of $t$ and $\eta$, one obtains
\[\|f - \sfE_Sf\|_1 < \eps.\]
On the other hand, the definition of $S$ gives
\[\eta|S| \leq \sum_{n \in S}\|\partial_n f\|_1 = \|\vpartial f\|_{L_1 \ell_1^N} \leq 1,\]
so $|S| \leq 1/\eta$, which is bounded only in terms of $\eps$.

Lastly, suppose instead that $f \in \cal{C}^1([0,1]^N)$.  Let $F:\bbT\to [0,1]$ be the map defined by
\[F(\theta + \bbZ) = \left\{\begin{array}{ll}2\theta & \quad \hbox{if}\ \theta \in [0,1/2)\\ 2 - 2\theta & \quad \hbox{if}\ \theta \in [1/2,1).\end{array}\right.\]
Then $F^{\times N}:\bbT^N\to [0,1]^N$ is $2$-Lipschitz, differentiable almost everywhere and measure preserving, so $f\circ F^{\times N}$ is differentiable almost everywhere and
\[\|\vpartial (f\circ F^{\times N})\|_{L_p\ell_p^N}\leq 2\|\vpartial f\|_{L_p\ell_p^N} \quad \forall p \in [1,\infty).\]

Therefore $f\circ F^{\times N}$ may be uniformly approximated by continuously differentiable functions satisfying the same inequalities, for example by convolving with a mollifier. Since also $(E_Sf)\circ F^{\times N} = E_S(f\circ F^{\times N})$, the proof is completed by applying the first part of the result to $f\circ F^{\times N}$.
\end{proof}

Theorem~\ref{thm:medmain} begs the following question.  I suspect the answer is negative, but have not been able to find a counterexample.

\begin{ques}\label{ques}
In the statement of Theorem~\ref{thm:medmain}, is it enough to assume that $\|\vpartial f\|_{L_1\ell_1^N} \leq 1$, without the bound on $\|f - \int f\|_2$?  (Of course one should expect a worse dependence of $q$ on $\eps$.)
\end{ques}

\begin{rmks}
\emph{1.}\quad The argument above can also be used, essentially without change, to prove an analog of Theorem~\ref{thm:medmain} for the standard Gaussian measure $\g$ on $\bbR^N$: a function $f \in \cal{C}^1(\bbR^N)$ for which $\int \|\vec{\partial} f\|_{\ell_1^N}\,\d\g \leq 1$ and $\|f\|_{L_2(\g)} \leq 1$ may be approximated in $L_1(\g)$ by functions of boundedly many coordinates.

Indeed, it seems likely that these methods, and Theorem~\ref{thm:medmain}, generalize to a large class of Markov diffusion semigroups subject to suitable `curvature' conditions, as studied in~\cite{Led00,BakGenLed14}.  More general reverse Poincar\'e inequalities, for instance, can be found in~\cite[Section 4.7]{BakGenLed14}, and hypercontractivity estimates in~\cite[Section 5.2]{BakGenLed14}.

\vspace{7pt}

\emph{2.}\quad Another connection worth remarking is with the recent works~\cite{KelMosSen12} and~\cite{CorLed12}.  These establish versions of Talagrand's famous variance-bound for functions on $\{\pm 1\}^N$ (see~\cite{Tal94}) in various new settings, including some product and non-product measures on $\bbR^N$, using a similar strategy to that above.  It should also be possible to deduce Theorem~\ref{thm:medmain} directly from one of those Talagrand-type inequalities, such as~\cite[Theorem 1]{CorLed12}. \fin
\end{rmks}

\section{Proof of the general case}

\begin{lem}\label{lem:approxbyLip}
Let $(X,d)$ be a compact metric space and let $f:X\to\bbR$ be continuous with a non-decreasing modulus of continuity $\omega$.  Let $\eps > 0$, and let $K_1 := \sup_{r \geq \eps}r^{-1}\omega(r) \in [0,\infty]$ and $K= \max\{K_1,1\}$. Then there is a $K$-Lipschitz function $h:X\to\bbR$ such that $\|f - h\|_\infty \leq K\eps$.
\end{lem}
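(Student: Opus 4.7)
The plan is to use the standard McShane-type inf-convolution. Define
\[h(x) := \inf_{y \in X}\bigl(f(y) + K\,d(x,y)\bigr).\]
Since $f$ is bounded on the compact space $X$, this infimum is finite, and because each function $x \mapsto f(y) + K\,d(x,y)$ is $K$-Lipschitz (with Lipschitz constant exactly $K$ coming from the distance term), the pointwise infimum $h$ is itself $K$-Lipschitz. Substituting $y = x$ in the definition immediately yields $h(x) \leq f(x)$ for every $x$, which takes care of one half of the sup-norm bound.

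For the reverse inequality I plan to establish that for all $x,y \in X$,
\[f(x) - f(y) \leq K\,d(x,y) + K\eps,\]
since then infimizing over $y$ gives $f(x) - h(x) \leq K\eps$. To prove this I would split on the size of $d(x,y)$. If $d(x,y) \geq \eps$, the definition of $K_1$ gives $\omega(d(x,y)) \leq K_1\,d(x,y) \leq K\,d(x,y)$, and then $|f(x) - f(y)| \leq \omega(d(x,y))$ closes the case. If instead $d(x,y) < \eps$, monotonicity of $\omega$ together with the observation that $r=\eps$ is a legal choice in the supremum defining $K_1$ gives $\omega(d(x,y)) \leq \omega(\eps) \leq K_1\eps \leq K\eps$, which again dominates $f(x) - f(y)$.

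There is essentially no obstacle here: the whole content reduces to the two-case estimate above, and one brief remark that if $K_1 = \infty$ then $K = \infty$ and the conclusion is vacuously true, so we may as well assume $K_1 < \infty$ from the outset. Combining the Lipschitz property of $h$ with the two-sided pointwise estimates $f - K\eps \leq h \leq f$ gives both conclusions of the lemma.
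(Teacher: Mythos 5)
Your proof is correct. It follows the same overall strategy as the paper's (approximate $f$ from below by a pointwise infimum of $K$-Lipschitz ``cone'' functions, then use the modulus-of-continuity bound $\omega(r)\leq Kr+K\eps$ to show the infimum doesn't fall more than $K\eps$ below $f$), but the explicit construction is different. The paper first quantizes $f$ by its sub-level sets $X_n:=\{f\leq \eps n\}$ and takes $h(x):=\inf_{n\in\bbZ}(\eps n+K\,d(x,X_n))$, which costs an extra $\eps$ on the upper side ($h\leq f+\eps$) but is covered since $K\geq 1$. You instead take the direct McShane inf-convolution $h(x):=\inf_{y}(f(y)+K\,d(x,y))$, which avoids the level-set discretization altogether and gives the tighter one-sided bound $h\leq f$. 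Your two-case estimate ($d(x,y)\geq\eps$ via the definition of $K_1$; $d(x,y)<\eps$ via monotonicity and $\omega(\eps)\leq K_1\eps$) is equivalent to the paper's single-line trick of bounding $\omega(d(x,y))\leq\omega(d(x,y)+\eps)\leq K(d(x,y)+\eps)$. Net effect: same idea, slightly cleaner execution on your end.
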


\begin{proof}
This is vacuous if $K = \infty$, so assume $K$ is finite.  For each $n \in \bbZ$ let
\[X_n := \{x \in X\,|\ f(x) \leq \eps n\},\]
and define $h:X\to \bbR$ by
\[h(x) := \inf_{n\in \bbZ}(\eps n + Kd(x,X_n)).\]
As a pointwise infimum of $K$-Lipschitz functions, $h$ is $K$-Lipschitz.  If $f(x) \leq \eps n$, then $x \in X_n$, and so $h(x) \leq \eps n + Kd(x,X_n) = \eps n$.  Infimizing over $n$, this gives $h(x) \leq f(x) + \eps$.

On the other hand, for any $x \in X$, $n \in \bbZ$ and $y \in X_n$, one has
\[f(x) \leq f(y) + \omega(d(x,y)) \leq f(y) + \omega(d(x,y) + \eps) \leq f(y) + Kd(x,y) + K\eps.\]
Using that $f(y) \leq \eps n$ and then infimizing over $y \in X_n$, this gives
\[f(x) \leq \eps n + Kd(x,X_n) + K\eps.\]
Now infimizing over $n$ gives $f(x) \leq h(x) + K\eps$, as required.
\end{proof}

\begin{proof}[Proof of Theorem~\ref{thm:bigmain}]
As Gromov reminds us in the proof of the Non-Dissipation Theorem in~\cite[Section 3$\frac{1}{2}.62$]{Gro01}, since $(X,d,\mu)$ is connected and locally connected, there is a uniformly continuous map $F:\bbT\to X$ such that $F_\ast m = \mu$.  Let $\s$ be a non-decreasing modulus of continuity for $F$.  For any $(x_n)_n,(y_n)_n \in X^N$, one has
\[\max_{n\leq N}d(F(x_n),F(y_n)) \leq \max_{n \leq N}\s(|x_n - y_n|) = \s (|(x_n)_n - (y_n)_n|_\infty^N),\]
so $F^{\times N}:\bbT^N\to X^N$ has the same modulus of continuity $\s$ for all $N$ for the metrics $|\cdot|_\infty^N$ and $d_\infty^N$.

Now let $f:X^N\to \bbR$ have non-decreasing modulus of continuity $\omega$ for the metric $d_\infty^N$.  By the above, $f\circ F^{\times N}:\bbT^N\to \bbR$ has non-decreasing modulus of continuity $\omega\circ \s$.  Let $\eps > 0$, and let $K_\eps := \max\{\sup_{r \geq \eps}r^{-1}\omega(\s(r)),1\}$, as in Lemma~\ref{lem:approxbyLip} for the function $f$.  That lemma gives a $K_\eps$-Lipschitz function $h:\bbT^N\to\bbR$ for which
$\|f\circ F^{\times N} - h\|_\infty \leq K_\eps\eps$.

Having found this $h$, Theorem~\ref{thm:smallmain} gives a set $S \subseteq [N]$ with $|S|$ bounded in terms of $\eps$ and $K_\eps$ (hence, in terms of $\eps$, $\omega$ and $\s$) such that
\[\|h - \sfE_Sh\|_1 \leq \eps,\]
and combining this with the previous inequalities gives
\[\|f - \sfE_Sf\|_1 \leq 2\|f\circ F^{\times N} - h\|_1 + \|h - \sfE_Sh\|_1 \leq 2K_\eps\eps + \eps.\]
An easy exercise shows that $K_\eps\eps \to 0$ as $\eps \to 0$, so this completes the proof.
\end{proof}

\section{Lipschitz maps between Hamming cubes}

\begin{proof}[Proof of Theorem~\ref{thm:biLiprig}]
Suppose that $F = (F_1,\ldots,F_M):[0,1]^N\to [0,1]^M$ is $L$-Lipschitz between the Hamming metrics.  It may be uniformly approximated by smooth $L$-Lipschitz functions, so assume smoothness also.  The $L$-Lipschitz bound implies that
\[\sum_{m=1}^M |F_m(x + \d x) - F_m(x)| \leq L\sum_{n=1}^N |\d x_n|\]
for all $x \in [0,1]^N$ and all perturbations $\d x = (\d x_1,\ldots,\d x_N)$.

For fixed $n \leq N$, this inequality may be applied with $\d x = (0,\ldots,0,\d x_n,0,\ldots,0)$ to obtain
\[\max_{m\leq M}|F_m(x + \d x) - F_m(x)| \leq \sum_{m = 1}^M |F_m(x + \d x) - F_m(x)| \leq L|\d x_n|.\]
Normalizing and letting $|\d x_n| \to 0$, this implies firstly that $\|\partial_n F_m\|_{L_\infty} \leq L$ for all $n$ and $m$, and secondly that
\[\sum_{m=1}^M \sum_{n=1}^N|\partial_n F_m(x)| \leq LN = L M/\a \quad \Longrightarrow \quad \frac{1}{M}\sum_{m=1}^M \|\vpartial F_m\|_{L_1\ell_1^N} \leq L/\a.\]

Given $\eps > 0$, let
\[I := \big\{m \leq M\,\big|\ \|\vpartial F_m\|_{L_1\ell_1^N} \leq 2L/\a\eps\big\}.\]
Applying Chebyshev's Inequality, the last inequality above implies that $|I| \geq (1-\eps/2)M$.

For each $m \in I$, one has $\|\vpartial F_m\|_{L_1\ell_1^N} \leq 2L/a\eps$, and of course $F_m$ is uniformly bounded by $1$. Therefore, for each $m \in I$, the second part of Theorem~\ref{thm:medmain} gives a subset $S_m \subseteq [N]$ of size bounded in terms of $L$, $\eps$ and $\a$ such that
\[\|F_m - E_{S_m}F_m\|_1 \leq \eps/2.\]

Let $G_m := E_{S_m}F_m$ for $m \in I$, and let $G_m$ be any constant function in case $m \in [M]\setminus I$.  Then the above inequalities combine to give
\[\int_{[0,1]^N}|F(x) - G(x)|^M_1\,\d x \leq \frac{1}{M}\sum_{m\in I}\|F_m - E_{S_m}F_m\|_1 + \frac{M - |I|}{M} < \eps/2 + \eps/2 = \eps.\]
\end{proof}

Equivalently, Theorem~\ref{thm:biLiprig} asserts that most of the functions $F_m$ are individually close to functions that depend on only small sets of coordinates.  One cannot tighten this conclusion to apply to strictly every $F_m$.  For example, the function
\[[0,1]^N \to [0,1]^N:(x_1,\ldots,x_N) \mapsto \big(x_1,\ldots,x_{N-1},\sin(2\pi(x_1 + \cdots + x_N))\big)\]
is easily checked to be continuously differentiable and $2$-Lipschitz for the Hamming metrics, but its last coordinate is not close to any function depending on fewer than $N$ coordinates.  By concatenating several such examples, for any divergent positive sequence $a_N = \rm{o}(N)$, one may construct a sequence of $2$-Lipschitz maps in which roughly $a_N$ of the output coordinates each depend on roughly $N/a_N$ of the input coordinates.

A similar argument to the proof of Theorem~\ref{thm:biLiprig}, using Friedgut's Theorem~\cite{Fri98} on Boolean functions with small influences in place of Theorem~\ref{thm:medmain}, shows that the analog of Theorem~\ref{thm:biLiprig} also holds for functions
\[\{0,1\}^N \to \{0,1\}^M,\]
where domain and target are again given the Hamming metrics.  This argument appeared recently in Subsection 1.1 of~\cite{BenCohShi--HammbiLip}, where it was used to analyze a particular bi-Lipschitz bijection between $\{0,1\}^N$ and a radius-$(N/2)$ Hamming ball in $\{0,1\}^{N+1}$ (although those authors were also able to give a more precise bespoke analysis for that function).  Actually, the argument using Friedgut's Theorem in~\cite{BenCohShi--HammbiLip} requires only `bounded average stretch' for the function $\{0,1\}^N \to \{0,1\}^M$.  In our setting of a map $F = (F_1,\ldots,F_m):[0,1]^N\to [0,1]^M$, this corresponds to assuming that
\[\sum_{m=1}^M \|\vec{\partial}F_m\|_{L_1\ell_1^N} \leq LM\]
for some fixed constant $L$.  One can therefore obtain a closer analog of that conclusion from~\cite{BenCohShi--HammbiLip} by using the full strength of Theorem~\ref{thm:medmain}.

By contrast, no obvious analog of Theorem~\ref{thm:biLiprig} holds between Euclidean spheres with the Euclidean metrics.  To see this, consider the following two natural classes of Lipschitz map $\rm{S}^{3N-1}\to \rm{S}^{3N-1}$:
\begin{itemize}
\item maps of the form
\[(x_1,\ldots,x_{3N}) \mapsto \big(F_1(x_1,x_2,x_3),\ldots,F_N(x_{3N-2},x_{3N-1},x_{3N})\big),\]
where $F_i:\bbR^3\to \bbR^3$ for $i\leq N$ are $10$-Lipschitz functions such that $\|F_i(u,v,w)\|_{\ell_2^3} = \|F_i(u,v,w)\|_{\ell_2^3}$ (for example, one may choose a sequence of $10$-Lipschitz maps $\rm{S}^2\to \rm{S}^2$ and extend them radially to $\bbR^3$);
\item orthogonal rotations.
\end{itemize}
Both of these classes are individually somewhat simple.  However, one may now form a composition of a few maps drawn alternately from these two classes, and easily produce a map with the property that no output coordinate is close to any function that depends on only a low-dimensional projection of the domain sphere.  It is also easy to see that such compositions can be chosen to be $\rm{O}(1)$-bi-Lipschitz and preserve Lebesgue measure on $\rm{S}^{3N-1}$, by a suitable choice of the functions $F_i$ appearing in the examples of the first kind.

\section{Some questions about non-product measures}

Several important developments in the theory of concentration have come from extensions of concentration inequalities from product to non-product measures.  Similarly, it would be interesting to know whether any other natural measures on product spaces enjoy an analog of Theorem~\ref{thm:bigmain}.

One rich source of examples is dynamics. Suppose that $(X,d,\mu)$ is a connected and locally connected metric probability space and that $T:X\to X$ is a $\mu$-preserving homeomorphism.  Then for each $N$ one may form the topological embeddings
\[T^{[0;N-1]}:= (\rm{id},T,T^2,\ldots,T^{N-1}):X\to X^N,\]
and consider the image $T^{[0;N-1]}(X)$ and pushforward measure $T^{[0;N-1]}_\ast \mu$.  Equivalently, one may consider the original measure space $(X,\mu)$ with the pullback of the metric $d^N_\infty$, giving the new metric
\[D^N_\infty(x,y) := \max_{0 \leq n \leq N-1}d(T^nx,T^ny).\]
This construction is important in the theory of dynamical systems: the exponential growth rate of the $D^N_\infty$-covering numbers of $X$ gives the Bowen-Dinaburg definition of topological entropy.

One may obtain examples similar to $\ell_\infty$-product spaces with product measures in this way.  For instance, if $(X,\mu,T)$ is the shift on $\bbT^\bbZ$ with its Haar measure and some sensible choice of compact metric, then the above metrics $D^N_\infty$ behave increasingly like the metrics $|\cdot|_\infty^N$ on finite marginals of $\bbT^\bbZ$.  This suggests the following question.

\begin{ques}
For which systems $(X,\mu,T)$ is it the case that $\forall \eps > 0$ $\exists p,K \geq 1$ such that $\forall N \geq 1$, if $f:X\to \bbR$ is $1$-Lipschitz for the metric $D^N_\infty$, then it is $\eps$-close in $\|\cdot\|_{L_2(\mu)}$ to a function of the form
\[F(T^{n_1}x,T^{n_2}x,\ldots,T^{n_p}x)\]
for some $n_1,\ldots,n_p \in \{0,1,\ldots,N-1\}$, where $F:X^p\to \bbR$ is $K$-Lipschitz for the metric $d^p_\infty$?
\end{ques}

Another instructive example is the map $\theta \mapsto 2\theta$ on $(\bbT,|\cdot|,m_\bbT)$.  An easy exercise shows that for this map, $D^N_\infty$ is equivalent up to constants to the metric
\[\rho^N(\theta,\theta') := \min\{2^N|\theta - \theta'|,1\}.\]
Therefore any function $\bbT^N \to [0,1]$ which is $2^N$-Lipschitz for the usual metric $|\cdot|$ becomes Lipschitz with bounded Lipschitz constant for the metric $D^N_\infty$.  This certainly includes many maps that cannot be approximated in measure by `low-dimensional' functions $F$ as above, so the $(\times 2)$-map is not an example.  Similar reasoning seems to apply to Anosov diffeomorphisms, so these in general do not give examples.

\begin{ques}
Are there any positive examples for the previous question in which $(X,T)$ has finite topological entropy?
\end{ques}

\bibliographystyle{alpha}
\bibliography{bibfile}

\parskip 0pt
\parindent 0pt

\vspace{7pt}

\end{document}